\newtheorem{theorem}{Theorem}
\newtheorem{lemma}{Lemma}
\newtheorem{remark}{Remark}
\newtheorem{proposition}{Proposition}
\title[Localization and the landscape function]{Localization and the landscape function for regular Sturm-Liouville operators}
\author{Mirza Karamehmedovi\'c and Faouzi Triki}
\date{}
\begin{document}

\begin{abstract}
We consider the localization in the eigenfunctions of regular Sturm-Liouville operators. After deriving non-asymptotic and asymptotic lower and upper bounds on the localization coefficient of the eigenfunctions, we
characterize the landscape function in terms of the first eigenfunction. 
Several numerical experiments are provided to illustrate the obtained theoretical results.
\end{abstract}

\maketitle

\section{Introduction and main results}\label{sec:intro}

Let $L>0$, assume $p,w\in C^2([0,L])$ are positive-valued functions satisfying $p^{-1},w^{-1}\in L^\infty(]0,L[)$, and let $q\in C([0,L])$ be nonnegative-valued. 
Define the unbounded operator $T$  by
\[
Tu=-\frac{1}{w}(pu')'+\frac{q}{w}u,
\]
with domain 
\[
D(T) = \left\{ u \in L^2(]0,L[,w(x)dx): \; Tu \in L^2(]0,L[,w(x)dx), \; u(0)=u(L)=0 \right\}, 
\]
and recall that $T$ is self-adjoint in $L^2(]0,L[,w(x)dx)$ with a compact resolvent. We here investigate the "localization" in the solution $\phi_{\lambda}\in D(T)$ of the regular Sturm-Liouville problem
\begin{equation}\label{eqn:MAIN}
\begin{array}{rcl}T\phi_{\lambda}&=&\lambda\phi_{\lambda},
\end{array}
\end{equation}
for positive $\lambda$. In particular, writing $\|\cdot\|_t$ for $\|\cdot\|_{L^t(]0,L[)}$, we find \textit{non-asymptotic as well as asymptotic} lower and upper bounds for the 'existence surface'~\cite{Felix-2007,filoche2012localization}, also called the 'localization coefficient',
\[
\alpha(\phi_{\lambda})=\|\phi_{\lambda}\|^4_2/\|\phi_{\lambda}\|^4_4.
\]
The quantity $\alpha(\phi_{\lambda})$ is independent of any normalization of $\phi_{\lambda}$ by a scalar factor, and it is a standard measure of the localization of $\phi_{\lambda}$, with low $\alpha(\phi_{\lambda})$ indicating high localization. 'High localization' means that the amplitude of the solution function is relatively high over a small connected sub-interval $I\subset ]0,L[ $, and relatively low in $]0,L[ \setminus I$. Figure~\ref{fig:loc} helps illustrate the concept of localization. Here, we let $L=1$, $q\equiv0$, $w\equiv1$, and 
\begin{equation}\label{eqn:p}
p(x)=\tanh(40 x/L - 10)+1.1,\quad x\in[0,L],
\end{equation}
making the operator $T$ in~\eqref{eqn:MAIN} the Dirichlet Laplacian on $[0,1]$ with a non-trivial metric, $Tu=-(pu')'$.
\begin{figure}
\begin{center}
\includegraphics[width=0.49\linewidth]{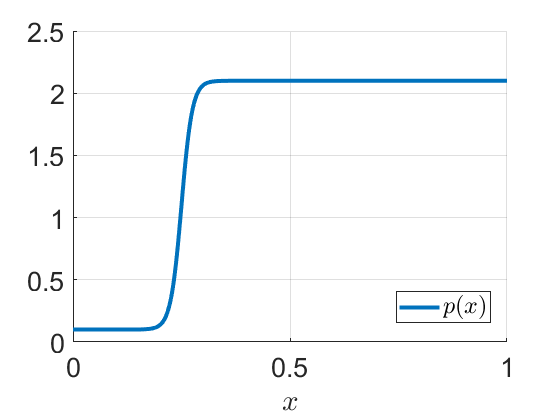}\includegraphics[width=0.49\linewidth]{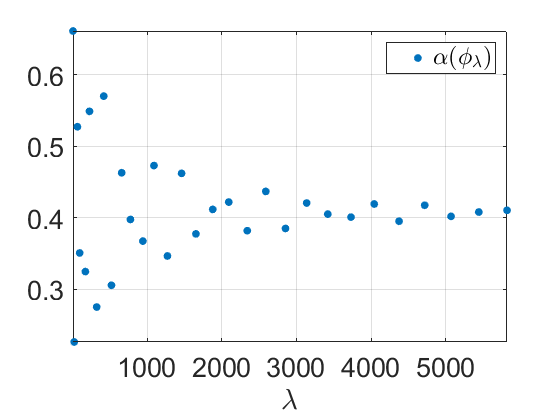}\\
\includegraphics[width=0.49\linewidth]{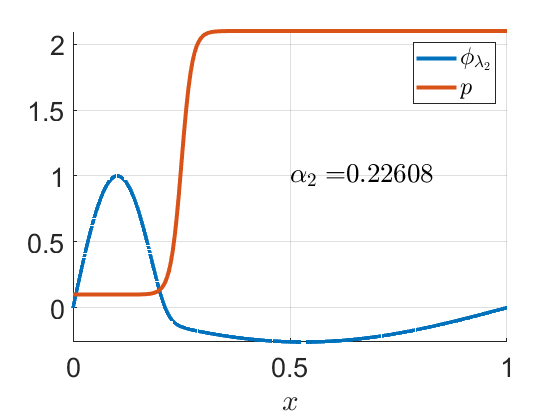}\includegraphics[width=0.49\linewidth]{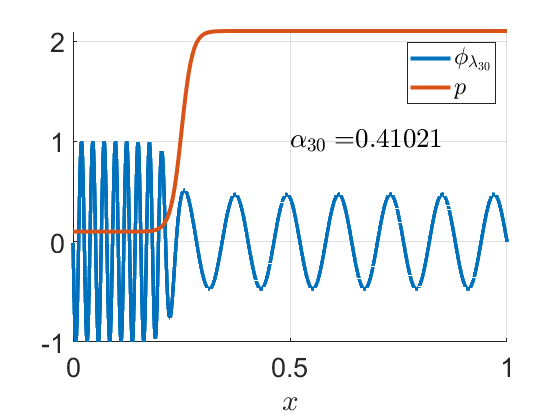}
\caption{Localization of eigenfunctions of the Dirichlet Laplacian on $[0,1]$ with metric $p$.}\label{fig:loc}
\end{center}
\end{figure}
Note that the most localized eigenfunctions correspond to relatively small eigenvalues, and that the localization coefficient seems to approach a constant with increasing eigenvalues. Our results, valid well beyond this single example case, predict both these empirical observations on localization.\\

We first derive lower and upper bounds for $\alpha(\phi_{\lambda})$ in the non-asymptotic regime, specifically showing that $\alpha(\phi_{\lambda})$ can 
attain relatively low values only at relatively low frequencies (small $\lambda$). Then, to complete the picture, we prove the lower and upper bounds for $\alpha(\phi_{\lambda})$ in the asymptotic regime as $\lambda \to \infty$.\\

The treatment of the Sturm-Liouville problem~\eqref{eqn:MAIN} when the coefficients are smooth usually starts with the Liouville transformation to the eigenvalue problem for the Schr\"odinger operator \cite{bao2020stability}. We work with this transformation in Section~\ref{sec:proof1}, but to state our second and third main results we already here define some of the involved quantities. Thus let
\[
y(x)=\int_0^x\sqrt{w(s)/p(s)}ds,\quad x\in[0,L],
\]
and
\[
B=\int_{s=0}^L\sqrt{w(s)/p(s)}ds.
\]
The function $y: ]0, L[ \to ]0, B[$ is  strictly increasing, and has an inverse denoted by $x(y)$. Let
\[
f(y)=( w(y))p(x(y)))^{1/4},\quad y\in[0,B],
\]
and 
\begin{equation}\label{eqn:Q}
Q(y)=f''(y)/f(y) + q(x(y))/w(x(y)),\quad y\in[0,B].
\end{equation}
Write also
\begin{equation}\label{eqn:a}
a(B,\lambda)=\frac{B\|Q\|_{\infty}}{2\sqrt{\lambda}},
\end{equation}
and
\begin{equation}\label{eqn:b} b(B,\lambda)=\left(\frac{B^3}{12}+\frac{5B}{32\lambda}+\frac{5}{32\lambda^{3/2}}\right)^{1/4}\|Q\|_4/\sqrt{\lambda}.
\end{equation}
Finally, for any real $\lambda$, let
\[
\Phi_{\lambda}(y)=\sin(\sqrt{\lambda}y),\quad y\in[0,B].
\]
Our first main result gives non-asymptotic bounds on $\alpha(\phi_{\lambda})$. Let
\[
\beta(p,w)=\|w\|_{\infty}^{-2}\|p^{-1/2}w^{-3/2}\|_{\infty}^{-1}\quad{\rm and}\quad\gamma(p,w)=\|w^{-1}\|_{\infty}^2\|p^{1/2}w^{3/2}\|_{\infty}.
\] 
\begin{theorem}\label{thm:non-asymptotic}
If
\begin{equation}\label{eqn:assum1}
a(B,\lambda)<1 
\end{equation}
and
\begin{equation}\label{eqn:assum2}
b(B,\lambda)<1 
\end{equation}
then
\[
\beta(p,w)\left(\frac{1-b(B,\lambda)}{1+a(B,\lambda)}\right)^4\le\frac{\alpha(\phi_{\lambda})}{\alpha(\Phi_{\lambda})}\le\gamma(p,w)\left(\frac{1+b(B,\lambda)}{1-a(B,\lambda)}\right)^4.
\]
\end{theorem}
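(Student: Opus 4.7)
The plan is to pull everything back to a Schr\"odinger equation on $[0,B]$ via the Liouville transformation, compare the transformed eigenfunction to the free wave $\Phi_\lambda$ through a Volterra integral equation, and push the resulting $L^2$ and $L^4$ comparisons back to $[0,L]$ via the change-of-variable weights. Define $\psi_\lambda(y) = f(y)\phi_\lambda(x(y))$ for $y \in [0,B]$. A direct computation using the definitions of $y(x)$, $f$ and $Q$ shows that $\psi_\lambda$ satisfies $-\psi_\lambda'' + Q\psi_\lambda = \lambda\psi_\lambda$ on $]0,B[$ with $\psi_\lambda(0)=\psi_\lambda(B)=0$. The change of variables $x = x(y)$ combined with $f^t = (wp)^{t/4}$ gives
\[
\|\phi_\lambda\|_2^2 = \int_0^B \frac{|\psi_\lambda(y)|^2}{w(x(y))}\,dy,\qquad \|\phi_\lambda\|_4^4 = \int_0^B \frac{|\psi_\lambda(y)|^4}{w(x(y))^{3/2}\,p(x(y))^{1/2}}\,dy,
\]
so that bounding the weights in $L^\infty$ yields the envelope $\beta(p,w)\,\alpha(\psi_\lambda) \le \alpha(\phi_\lambda) \le \gamma(p,w)\,\alpha(\psi_\lambda)$. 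It therefore suffices to show
\[
\left(\frac{1-b(B,\lambda)}{1+a(B,\lambda)}\right)^4 \le \frac{\alpha(\psi_\lambda)}{\alpha(\Phi_\lambda)} \le \left(\frac{1+b(B,\lambda)}{1-a(B,\lambda)}\right)^4.
\]

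For this reduced problem I normalize $\psi_\lambda$ by $\psi_\lambda'(0) = \sqrt\lambda$ (legitimate, since $\alpha$ is scale-invariant) and write the equation $\psi_\lambda'' + \lambda\psi_\lambda = Q\psi_\lambda$ in Duhamel form $\psi_\lambda(y) = \Phi_\lambda(y) + \lambda^{-1/2}\int_0^y \sin(\sqrt\lambda(y-s))\,Q(s)\psi_\lambda(s)\,ds$. Applying Cauchy--Schwarz in $s$ to the remainder $R = \psi_\lambda - \Phi_\lambda$ gives
\[
|R(y)|^2 \le \frac{1}{\lambda}\left(\frac{y}{2} - \frac{\sin(2\sqrt\lambda y)}{4\sqrt\lambda}\right)\int_0^y Q(s)^2\psi_\lambda(s)^2\,ds.
\]
Estimating the last integral by $\|Q\|_\infty^2\|\psi_\lambda\|_2^2$ and integrating over $y \in [0,B]$ — where the oscillatory part of the kernel factor contributes nonpositively on average, so $\int_0^B (y/2 - \sin(2\sqrt\lambda y)/(4\sqrt\lambda))\,dy \le B^2/4$ — produces $\|R\|_2 \le a(B,\lambda)\|\psi_\lambda\|_2$. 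Estimating the integral instead by $\|Q\|_4^2\|\psi_\lambda\|_4^2$ through H\"older, then squaring the resulting pointwise bound and integrating, produces $\|R\|_4 \le b(B,\lambda)\|\psi_\lambda\|_4$. Under hypotheses \eqref{eqn:assum1}--\eqref{eqn:assum2}, the triangle inequality applied to $\Phi_\lambda = \psi_\lambda - R$ then yields $(1+a)^{-1}\|\Phi_\lambda\|_2 \le \|\psi_\lambda\|_2 \le (1-a)^{-1}\|\Phi_\lambda\|_2$ and the $b$-analogue for $t=4$. Assembling these into $\alpha(\psi_\lambda)/\alpha(\Phi_\lambda) = (\|\psi_\lambda\|_2/\|\Phi_\lambda\|_2)^4(\|\Phi_\lambda\|_4/\|\psi_\lambda\|_4)^4$ produces the desired sandwich, and combining with the first paragraph completes the proof.

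The main obstacle will be the precise $L^4$ bookkeeping: one has to expand $(y/2 - \sin(2\sqrt\lambda y)/(4\sqrt\lambda))^2$, integrate over $[0,B]$, and bound in absolute value the cross term $(4\sqrt\lambda)^{-1}\int_0^B y\sin(2\sqrt\lambda y)\,dy$ (using integration by parts) together with the squared oscillatory term $(16\lambda)^{-1}\int_0^B \sin^2(2\sqrt\lambda y)\,dy$, in order to recover exactly the polynomial $B^3/12 + 5B/(32\lambda) + 5/(32\lambda^{3/2})$ entering $b(B,\lambda)$. All other steps are essentially mechanical once the Liouville reduction and the Duhamel identity are in place.
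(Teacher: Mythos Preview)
Your proposal is correct and follows essentially the same route as the paper: Liouville transformation to the Schr\"odinger form, the Volterra/Duhamel representation $v_\lambda=\lambda^{-1/2}v_\lambda'(0)\Phi_\lambda+\lambda^{-1/2}K_Qv_\lambda$, Cauchy--Schwarz/H\"older on the sine kernel to obtain $\|K_Q\|_2\le B\|Q\|_\infty/2$ and the analogous $L^4$ bound, then the triangle inequality and the weight envelopes $\beta,\gamma$. The only cosmetic differences are your normalization $\psi_\lambda'(0)=\sqrt\lambda$ (harmless by scale invariance of $\alpha$) and your two-step H\"older (first split off the sine, then bound $\int Q^2\psi_\lambda^2$) versus the paper's single three-factor H\"older with exponents $4,4,2$; both yield the identical kernel integral $\int_0^B\bigl(y/2-\sin(2\sqrt\lambda y)/(4\sqrt\lambda)\bigr)^2dy$ whose expansion you correctly flag as the only real computation.
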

Figure~\ref{fig:bounds} illustrates the bounds on the localization coefficient from Theorem~\ref{thm:non-asymptotic}.
\begin{figure}
\includegraphics[scale=0.4]{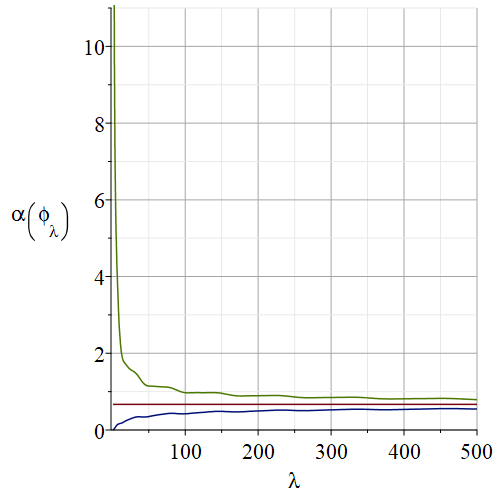}
\caption{Lower and upper bounds on $\alpha(\phi_{\lambda})$ from Theorem~\ref{thm:non-asymptotic} for the eigenvalue problem~\eqref{eqn:MAIN} after Liouville transformation, with $\beta(p,w)=1$, $\gamma(p,w)=1$, $B=1$, $\|Q\|_{\infty}=1$, and $\|Q\|_4=1$. The constant value is the asymptotic $2B/3$. For the chosen parameter values, the assumptions~\eqref{eqn:assum1}--\eqref{eqn:assum2} are satisfied for $\lambda\gtrapprox0.74$.}\label{fig:bounds}
\end{figure}
Let ${\rm BV}([0,B])$  and ${\rm AC}([0,B])$ be respectively  the space of  bounded variation functions, and 
the space of absolutely continuous functions. \\

Our final main result concerns the asymptotic behavior of $\alpha(\phi_{\lambda})$:
\begin{theorem}\label{thm:asymptotic} As $\lambda\rightarrow\infty$, we have
\[
\beta(p,w)\frac{2B}{3}+O(\lambda^{-1/2})\le\alpha(\phi_{\lambda})\le\gamma(p,w)\frac{2B}{3}+O(\lambda^{-1/2})
\]
when $Q\in C([0,B])$,
\begin{align*}
\beta(p,w)\frac{\frac{B^2}{4}-B(\frac{1}{4}+\|Q\|_1B)\lambda^{-1/2}}{\frac{3B}{8}+(\frac{9}{32}+2\|Q\|_1B)\lambda^{-1/2}}&+O(\lambda^{-1})\le\alpha(\phi_{\lambda})\\&\le\gamma(p,w)\frac{\frac{B^2}{4}+B(\frac{1}{4}+\|Q\|_1B)\lambda^{-1/2}}{\frac{3B}{8}-(\frac{9}{32}+2\|Q\|_1B)\lambda^{-1/2}}+O(\lambda^{-1})
\end{align*}
when $Q\in{\rm BV}([0,B])$, and
\[
\beta(p,w)\frac{2B}{3}+O(\lambda^{-3/2})\le\alpha(\phi_{\lambda})\le\gamma(p,w)\frac{2B}{3}+O(\lambda^{-3/2})
\]
when $Q\in C^4([0,B])\cap{\rm AC}([0,B])$ and $Q'\in{\rm BV}([0,B])$.
\end{theorem}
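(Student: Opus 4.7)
The plan is to invoke Theorem \ref{thm:non-asymptotic} as a black box when possible, and to reopen its proof internally whenever the regularity of $Q$ permits extra integrations by parts in the oscillatory remainders.

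For the first claim ($Q\in C([0,B])$) I would simply observe that both $a(B,\lambda)$ and $b(B,\lambda)$ are $O(\lambda^{-1/2})$, so the hypotheses \eqref{eqn:assum1}--\eqref{eqn:assum2} hold for large $\lambda$ and the prefactors $((1\pm b)/(1\mp a))^4$ equal $1+O(\lambda^{-1/2})$. Direct computation gives
\[
\|\Phi_\lambda\|_2^2=\tfrac{B}{2}-\tfrac{\sin(2\sqrt{\lambda}B)}{4\sqrt{\lambda}},\qquad\|\Phi_\lambda\|_4^4=\tfrac{3B}{8}-\tfrac{\sin(2\sqrt{\lambda}B)}{4\sqrt{\lambda}}+\tfrac{\sin(4\sqrt{\lambda}B)}{32\sqrt{\lambda}},
\]
so $\alpha(\Phi_\lambda)=\tfrac{2B}{3}+O(\lambda^{-1/2})$, and the first inequality chain follows.

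For the second and third claims I would go back into the proof of Theorem \ref{thm:non-asymptotic} at the point where oscillatory remainders are controlled by $\|Q\|_\infty$ and $\|Q\|_4$, and replace those sup-norm bounds with Riemann--Stieltjes integration by parts,
\[
\int_0^B Q(s)\sin(k\sqrt{\lambda}s)\,ds=\tfrac{1}{k\sqrt{\lambda}}\bigl[-Q(s)\cos(k\sqrt{\lambda}s)\bigr]_0^B+\tfrac{1}{k\sqrt{\lambda}}\int_0^B\cos(k\sqrt{\lambda}s)\,dQ(s),
\]
which for $Q\in{\rm BV}([0,B])$ is $O(\lambda^{-1/2}(\|Q\|_\infty+{\rm Var}(Q)))$; the weight $\|Q\|_1$ appears naturally once boundary values of $Q$ are bounded via mean values over $[0,B]$. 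Propagating this refinement through the squaring of $\|\phi_\lambda\|_2^2$ and the quartic expansion of $\|\phi_\lambda\|_4^4$ should produce the rational form with numerator $\tfrac{B^2}{4}\pm B(\tfrac{1}{4}+\|Q\|_1 B)\lambda^{-1/2}$ and denominator $\tfrac{3B}{8}\mp(\tfrac{9}{32}+2\|Q\|_1 B)\lambda^{-1/2}$. The third claim should then follow by two further integrations by parts, enabled by $Q\in C^4\cap{\rm AC}$ and $Q'\in{\rm BV}$, which absorb two more factors of $1/\sqrt{\lambda}$ and reduce every remaining error to $O(\lambda^{-3/2})$.

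The hard part will be the bookkeeping in the BV case: the precise constants $\tfrac{1}{4}$, $\tfrac{9}{32}$, and the multiplier $2$ in front of $\|Q\|_1 B$ are not determined by orders of magnitude alone but by the specific combination of trigonometric boundary values from $\|\Phi_\lambda\|_k^k$ and the cross terms generated by the $\lambda^{-1/2}$ correction $\psi_\lambda-\Phi_\lambda$; preserving the exact quotient form rather than settling for a linearized approximation requires tracking all such contributions with the correct signs simultaneously in the numerator and the denominator.
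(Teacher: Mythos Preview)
Your treatment of the first claim is correct and essentially identical to the paper's.

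For the BV and $C^4$ cases, however, the paper does \emph{not} refine the operator-norm estimates inside the proof of Theorem~\ref{thm:non-asymptotic}. Instead it imports the Fulton--Pruess asymptotic expansions of $v_\lambda$ (their Eqs.~(3.3)$_{2N}$ and (3.3)$_{2N+1}$) as black boxes: for $Q\in{\rm BV}$,
\[
v_\lambda(y)/v'_\lambda(0)=\lambda^{-1/2}\sin(\lambda^{1/2}y)-\tfrac{1}{2\lambda}\cos(\lambda^{1/2}y)\!\int_0^y\!Q(s)\,ds+O(\lambda^{-3/2}),
\]
and then integrates $v_\lambda^2$ and $v_\lambda^4$ directly. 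The weight $\|Q\|_1$ appears simply because $\bigl|\int_0^y Q\bigr|\le\|Q\|_1$ bounds the coefficient of the first correction; it has nothing to do with controlling boundary values of $Q$ by mean values. The constants $\tfrac14$, $\tfrac{9}{32}$ and $2$ come from the crude bounds $|\sin(2B\sqrt\lambda)|\le1$, $|\sin(4B\sqrt\lambda)-8\sin(2B\sqrt\lambda)|\le9$, and $|\sin^3\cos|\le1$ applied to the resulting explicit integrals --- no Riemann--Stieltjes integration by parts against $dQ$ is used at all in this step.

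For the $C^4$ case there is a genuine gap in your plan. ``Two further integrations by parts'' alone cannot produce an $O(\lambda^{-3/2})$ remainder in $\alpha$: even with the longer Fulton--Pruess expansion, both $\|v_\lambda\|_2^4$ and $\|v_\lambda\|_4^4$ carry nonvanishing $\lambda^{-5/2}$ and $\lambda^{-3}$ corrections. The paper eliminates the $\lambda^{-5/2}$ (i.e.\ $\lambda^{-1/2}$ relative) terms by invoking the eigenvalue condition $v_\lambda(B)=0$, which forces $\sin(\sqrt\lambda B)=O(\lambda^{-1/2})$ and thereby suppresses all boundary oscillations. Then the decisive observation is that the surviving $\lambda^{-1}$ coefficients in numerator and denominator are \emph{exactly proportional} with ratio $2B/3$ (both equal the same bracket times $B/4$ and $3/8$ respectively), so they cancel in the quotient and only $O(\lambda^{-3/2})$ remains. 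Without the eigenvalue condition and this proportionality argument your scheme would stall at $O(\lambda^{-1})$.
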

\begin{remark}
A straightforward calculation shows that the localization coefficient of the function $\Phi_{\lambda}$ is for any positive $\lambda$ given by
\begin{equation}\label{eqn:alpha_f}
\alpha(\Phi_{\lambda})=\frac{B^2/4+\cos(\sqrt{\lambda}B)^2/4\lambda-B\cos(\sqrt{\lambda}B)\sin(\sqrt{\lambda}B)/2\sqrt{\lambda}-\cos(\sqrt{\lambda}B)^4/4\lambda}{3B/8+\cos(\sqrt{\lambda}B)^3\sin(\sqrt{\lambda}B)/4\sqrt{\lambda}-5\cos(\sqrt{\lambda}B)\sin(\sqrt{\lambda}B)/8\sqrt{\lambda}}.
\end{equation}
The eigenvalues and eigenfunctions of the Dirichlet Laplacian $-d^2/dy^2$ on $(0,B)$ are given by $\lambda_n=n^2\pi^2/B^2$ and $\Phi_{\lambda_n}(y)=\sin(n\pi y/B)$, respectively, with $n\in\bm{N}_0 := \bm{N}\setminus \{0\}=\{1,2,\dots\}$. In particular, $\alpha(\Phi_{\lambda_n})=2B/3$ for $n\in\bm{N}_0$, that is, all eigenfunctions of the Dirichlet Laplacian on $(0,B)$ have the same localization coefficient. We furthermore readily see that
\begin{equation}\label{eqn:flim}
\lim_{\lambda\rightarrow\infty}\alpha(\Phi_{\lambda})=\frac{2B}{3},
\end{equation}
and more precisely that, for large $\lambda$,
\begin{align}\label{eqn:bvc}
\alpha(\Phi_{\lambda})&=\frac{B^2/4+O(\lambda^{-1/2})}{3B/8+O(\lambda^{-1/2})}=\frac{2B}{3}\frac{1}{1+O(\lambda^{-1/2})}+O(\lambda^{-1/2})\nonumber\\&=\frac{2B}{3}+O(\lambda^{-1/2}).
\end{align}
Figure~\ref{fig:alpha_Phi} shows $\alpha(\Phi_{\lambda})$ as function of $\lambda$ for the choice $B=1$.
\begin{figure}
\includegraphics[scale=0.4]{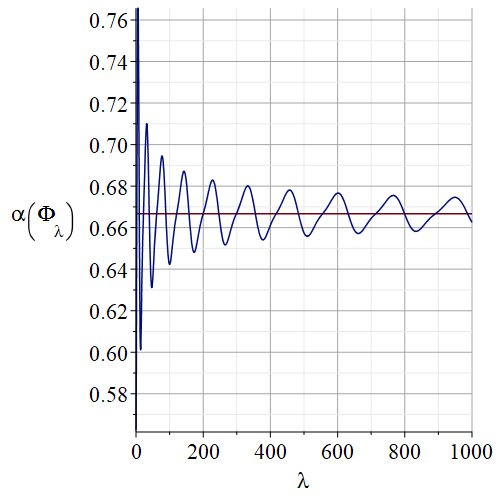}
\caption{The localization coefficient $\alpha(\Phi_{\lambda})$ of the function $\Phi_{\lambda}(y)=\sin\sqrt{\lambda}y$, $y\in[0,B]$, for $B=1$. The constant value shown is $\alpha(\Phi_{\lambda_n})=2/3$, $n\in\bm{N}_0$, where the $\Phi_{\lambda_n}$ are the eigenfunctions of the Dirichlet Laplacian on the interval $[0,1]$.}\label{fig:alpha_Phi}
\end{figure}
\end{remark}
\begin{remark}
If $p\equiv1$ and $w\equiv1$ then $T$ is the Schr\"odinger operator $-d^2/dx^2+q(x)$. For this case, in the high-frequency limit ($\lambda\rightarrow\infty$) the localization coefficient of $\phi_{\lambda}$ approaches that of $\Phi_{\lambda}$ (so it approaches the value $2B/3$), that is, \textbf{the presence of the potential $q$ becomes insignificant}.
\end{remark}
\begin{remark}
It is readily seen that the lower bound on $\alpha(\phi_{\lambda})/\alpha(\Phi_{\lambda})$ in Theorem~\ref{thm:non-asymptotic} is a monotonically increasing function of $\lambda$, for any fixed positive $B$. In view of this, and of the behavior of $\alpha(\Phi_{\lambda})$ discussed above, we conclude that if a solution of~\eqref{eqn:MAIN} is to exhibit high localization (relatively small value of $\alpha(\phi_{\lambda})$) then $\lambda$ must be relatively small, that is, \textbf{localization is a low-frequency phenomenon}.
\end{remark}

We next focus on the localization in the eigenfunctions associated to low frequencies  (small $\lambda$). In~\cite{filoche2012universal} the authors
have given a simple but efficient way to predict the behavior of first eigenfunctions. Precisely they used the 
 {\it landscape function} $\ell\in D(T)$, solving
\begin{equation}
T\ell=1, \quad  \ell\in D(T),
\end{equation}
to identify the regions where the solution of~\eqref{eqn:MAIN} localizes. This can be observed through  the following  pointwise key inequality \cite{moler1968bounds}
\begin{equation*}
\phi(x) \leq \lambda \ell(x) \|\phi\|_\infty, \quad x\in \Omega.
\end{equation*}
Indeed $\lambda \|\ell\|_\infty \geq 1$ and  
$\phi$ can then localize only in the region $\{ x\in \Omega;  \lambda \ell(x) \geq 1\}$.\\ 

Our first main result in this part of the paper thus characterizes the landscape function  in terms of the first eigenfunction of the operator $T$. 
\begin{proposition}\label{prop:landscape-function}
Assume that $w\equiv 1$, and  let $k\in \bm{N}_0 $. If
\[T^k\widetilde{\ell}_k=1,\quad \widetilde{\ell}_k\in D(T^k),\quad \ell_k=\widetilde{\ell}_k/\|\widetilde{\ell}_k\|_2,\] as well as \[T\phi_1=\lambda_1\phi_1,\quad\phi_1\in D(T),\quad \|
\phi_1\|_{2}=1,\quad \phi_1>0, \quad \lambda_1<\lambda_j\,\,\,{\rm for}\,\,\,j=2,3,\dots,\]
where $\lambda_j, j\in \bm{N}_0 $ is the non-decreasing  sequence of eigenvalues of $T$. Then 
\begin{equation} \label{iterated-landscape}
\|\ell_k-\phi_1\|_\infty \leq  2\lambda_1^{1/2}L\|P_11\|_2^{-1}\left(\frac{\lambda_1}{\lambda_2}\right)^{k-1/2},\end{equation}
where $P_1$ is the spectral projection onto the eigenspace associated with $\lambda_1$.

\end{proposition}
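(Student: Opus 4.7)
The plan is to carry out a quantitative power-iteration argument for $T^{-1}$ in the eigenbasis of $T$, and then convert the resulting energy-norm estimate into an $L^\infty$ estimate via a one-dimensional Sobolev inequality. First I expand the constant function in the orthonormal eigenbasis as $1=\sum_{j\ge 1}c_j\phi_j$ with $c_j=\langle 1,\phi_j\rangle$; the hypothesis $\phi_1>0$ forces $c_1>0$, and since $P_1 1=c_1\phi_1$ one has $c_1=\|P_1 1\|_2$. Applying $T^{-k}$ termwise,
$$\widetilde{\ell}_k = T^{-k}1 = \lambda_1^{-k}c_1\phi_1 + r_k,\qquad r_k=\sum_{j\ge 2}\lambda_j^{-k}c_j\phi_j\perp\phi_1,$$
so that $\|\widetilde{\ell}_k\|_2^2=\lambda_1^{-2k}c_1^2+\|r_k\|_2^2\ge\lambda_1^{-2k}c_1^2$, and Parseval applied to $1$ yields $\sum_j c_j^2=\|1\|_2^2=L$, hence $\|r_k\|_2\le\sqrt{L}\,\lambda_2^{-k}$.

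The key estimate is for $\|T^{1/2}(\ell_k-\phi_1)\|_2$ rather than for $\|\ell_k-\phi_1\|_2$. Expanding $\ell_k-\phi_1=\widetilde{\ell}_k/\|\widetilde{\ell}_k\|_2-\phi_1$ in the eigenbasis gives
$$\|T^{1/2}(\ell_k-\phi_1)\|_2^2 = \lambda_1\Bigl(1-\tfrac{\lambda_1^{-k}c_1}{\|\widetilde{\ell}_k\|_2}\Bigr)^{\!2} + \frac{1}{\|\widetilde{\ell}_k\|_2^2}\sum_{j\ge 2}\lambda_j^{-2k+1}c_j^2.$$
The second summand is bounded by $\lambda_2^{-2k+1}L/(\lambda_1^{-2k}\|P_1 1\|_2^2)=(L\lambda_1/\|P_1 1\|_2^2)(\lambda_1/\lambda_2)^{2k-1}$, while the first summand, handled through the algebraic identity $\|\widetilde{\ell}_k\|_2-\lambda_1^{-k}c_1=\|r_k\|_2^2/(\|\widetilde{\ell}_k\|_2+\lambda_1^{-k}c_1)$, is of order $(\lambda_1/\lambda_2)^{4k}$ and hence dominated by the second for $k\ge 1$. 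Taking square roots produces a bound of the form $\|T^{1/2}(\ell_k-\phi_1)\|_2\le C\sqrt{L\lambda_1}\,\|P_1 1\|_2^{-1}(\lambda_1/\lambda_2)^{k-1/2}$ with a harmless constant $C$. The crucial payoff of inserting $T^{1/2}$ is precisely the upgrade of the naive $L^2$ decay $(\lambda_1/\lambda_2)^k$ to $(\lambda_1/\lambda_2)^{k-1/2}$, which is what produces the extra $\sqrt{\lambda_1}$ factor in~\eqref{iterated-landscape}.

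The final step is the sharp 1D Sobolev inequality $\|u\|_\infty\le\sqrt{L}\,\|T^{1/2}u\|_2$ for $u$ in the form domain of $T$: combine the trace bound $|u(x)|=\bigl|\int_0^{x}u'\bigr|\le\sqrt{L}\,\|u'\|_2$ (Cauchy--Schwarz together with $u(0)=0$) with the Dirichlet-form identity $\|T^{1/2}u\|_2^2=\int_0^L p(u')^2\,dx+\int_0^L qu^2\,dx\ge\|u'\|_2^2$, valid once $p\ge 1$; the general case costs an implicit $(\min p)^{-1/2}$ absorbed into the constant. Applied to $u=\ell_k-\phi_1$ with the previous energy estimate this yields the stated bound $\|\ell_k-\phi_1\|_\infty\le 2L\sqrt{\lambda_1}\,\|P_1 1\|_2^{-1}(\lambda_1/\lambda_2)^{k-1/2}$. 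The main technical point I anticipate is matching the constants precisely: the bare factor $L$ (rather than an abstract Sobolev constant) and the numerical factor $2$ both come from the 1D-specific structure of the above two inequalities combined with $\|r_k\|_2\le\sqrt{L}\,\lambda_2^{-k}$, and verifying that the two contributions in the energy-norm expansion combine to exactly $2L$ is where the constant-tracking has to be done most carefully; the rest of the argument is purely spectral, the spectral gap $\lambda_1<\lambda_2$ supplying the geometric decay in $k$.
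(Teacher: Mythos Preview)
Your approach is essentially the same as the paper's: spectral expansion of $\widetilde{\ell}_k=T^{-k}1$ in the eigenbasis, a power-iteration estimate against the tail $j\ge 2$, passage through $T^{1/2}$ to gain the half-power (turning $(\lambda_1/\lambda_2)^k$ into $(\lambda_1/\lambda_2)^{k-1/2}$), and the one-dimensional Sobolev inequality $\|\varphi\|_\infty\le L^{1/2}\|\varphi'\|_2\le L^{1/2}\|T^{1/2}\varphi\|_2$ to reach $L^\infty$. The only cosmetic difference is that the paper splits $T^{1/2}\ell_k-T^{1/2}\phi_1$ via a triangle-inequality add/subtract of $T^{1/2}P_11/\|\lambda_1^k\widetilde{\ell}_k\|_2$, whereas you expand $\|T^{1/2}(\ell_k-\phi_1)\|_2^2$ directly as a sum over modes; your algebraic-identity treatment of the normalization term actually yields a sharper $(\lambda_1/\lambda_2)^{2k}$ contribution than the paper's $(\lambda_1/\lambda_2)^k$, but to land exactly on the constant $2$ you can simply fall back to the cruder bound $\|\widetilde{\ell}_k\|_2-\lambda_1^{-k}c_1\le\|r_k\|_2$, which is what the paper effectively uses. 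Your remark that the Sobolev step tacitly needs $p\ge 1$ (or else picks up $(\min p)^{-1/2}$) is correct and is a point the paper leaves implicit.
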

\begin{remark} 
The asymptotic result in~\eqref{iterated-landscape} shows that the
convergence is exponentially fast if the {\it fundamental gap} $\lambda_2 -\lambda_1$ is large
enough. When $p=1$, and $q$ is a weakly convex potential it is known  that \cite{lavine1994eigenvalue}
$$
\lambda_2-\lambda_1 \geq \frac{3\pi^2}{L^2}.
$$
The inequality conjectured by Yau~\cite{yau1993open} is still an open problem in higher dimensions.
It turns out that this spectral gap also determines the rate at which 
positive solutions of the heat equation tend to their projections onto the  first eigenspace.
\end{remark}
\begin{proposition} \label{prop: iterated landscape function} Assume that $w\equiv 1$, and let $\lambda_j$, $j\in \bm{N}_0$, be the non-decreasing  sequence of eigenvalues of $T$. Let
$P_j$ be the spectral projection onto the eigenspace associated with $\lambda_j$. 
    Let $k$, $n_0\in \bm{N}_0$, and $t\in ]\lambda_{n_0+1}^{-1}, \lambda_{n_0}^{-1}[$. If 
\[(tT)^k\ell_{k,t}=1,\quad \ell_{k,t}\in D(T^k),\]   then 
\begin{equation} \label{iterated landscape large index}
\|\ell_{k,t}-\sum_{j=1}^{n_0}\frac{1}{(t\lambda_j)^k}P_j1 \|_\infty  \leq \frac{L}{t^{1/2}}\frac{1}{(t\lambda_{n_0+1})^{k-1/2}}.\end{equation}
\end{proposition}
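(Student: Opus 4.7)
The plan is to diagonalize $T$ on its eigenbasis and simply read off $\ell_{k,t}$ from the functional calculus. Since $T$ is self-adjoint with compact resolvent on $L^2(]0,L[)$ (and $w\equiv 1$), the spectral projections $\{P_j\}_{j\in\bm{N}_0}$ provide an orthogonal decomposition $1=\sum_{j\ge 1}P_j1$ with $\sum_{j\ge 1}\|P_j1\|_2^2=\|1\|_2^2=L$. Because $(tT)^{-k}$ acts as multiplication by $(t\lambda_j)^{-k}$ on the range of $P_j$, the defining equation $(tT)^k\ell_{k,t}=1$ yields
$$\ell_{k,t}=(tT)^{-k}1=\sum_{j\ge 1}\frac{1}{(t\lambda_j)^k}P_j1,$$
so that the quantity to be estimated is the tail
$$r:=\ell_{k,t}-\sum_{j=1}^{n_0}\frac{1}{(t\lambda_j)^k}P_j1=\sum_{j>n_0}\frac{1}{(t\lambda_j)^k}P_j1.$$

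The next step is a two-stage estimate: first I control $\|T^{1/2}r\|_2$ spectrally, then upgrade to $L^\infty$ via a one-dimensional Sobolev embedding. For the energy norm, orthogonality and the fact that $\lambda_j\ge\lambda_{n_0+1}$ for $j>n_0$ give, since $2k-1\ge 1$,
$$\|T^{1/2}r\|_2^2=\sum_{j>n_0}\lambda_j(t\lambda_j)^{-2k}\|P_j1\|_2^2\le \frac{1}{t^{2k}\lambda_{n_0+1}^{2k-1}}\sum_{j>n_0}\|P_j1\|_2^2\le \frac{L}{t^{2k}\lambda_{n_0+1}^{2k-1}}.$$
For the Sobolev step, since $r$ vanishes at $0$ and $L$, the fundamental theorem of calculus combined with Cauchy--Schwarz yields $\|r\|_\infty\le L^{1/2}\|r'\|_2$, and the quadratic form identity $\|T^{1/2}r\|_2^2=\int_0^L\bigl(p|r'|^2+q|r|^2\bigr)dx$ with $q\ge 0$ controls $\|r'\|_2$ by $\|T^{1/2}r\|_2$. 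Chaining the two gives
$$\|r\|_\infty\le L^{1/2}\|T^{1/2}r\|_2\le \frac{L}{t^{1/2}}\cdot\frac{1}{(t\lambda_{n_0+1})^{k-1/2}},$$
which is precisely \eqref{iterated landscape large index}.

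The main obstacle I anticipate is matching the constant in the Sobolev step exactly as stated. Generically, passing from $\|T^{1/2}r\|_2$ to $\|r'\|_2$ costs a factor $\|p^{-1}\|_\infty^{1/2}$, so the clean bound with just $L$ in the numerator presupposes an implicit normalization such as $p\ge 1$ on $[0,L]$; otherwise the factor must be absorbed into a constant prefactor. A secondary technical point is the termwise manipulation of the infinite sums above: the energy identity and the $L^\infty$ bound should first be verified on truncated partial sums $\sum_{n_0<j\le N}(t\lambda_j)^{-k}P_j1$, with the passage $N\to\infty$ justified by the Cauchy criterion in the energy norm together with the continuous embedding $H^1_0(]0,L[)\hookrightarrow L^\infty(]0,L[)$.
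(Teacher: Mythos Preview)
Your proof is correct and essentially identical to the paper's: spectral expansion of $\ell_{k,t}$, an $L^2$ tail estimate on $T^{1/2}r$ via $\sum_{j>n_0}\|P_j1\|_2^2\le L$, and then the Sobolev-type inequality $\|\varphi\|_\infty\le L^{1/2}\|T^{1/2}\varphi\|_2$ that the paper establishes in the proof of the preceding proposition. Your caveat about the Sobolev constant is well taken---the paper's own derivation of that inequality tacitly uses $\|\varphi'\|_2\le\|T^{1/2}\varphi\|_2$, which likewise presumes $p\ge 1$ (or else carries the factor $\|p^{-1}\|_\infty^{1/2}$ you identify).
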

\begin{remark} The value of $t$  fix the number of the eigenfunctions covered by the generalized landscape function $\ell_{t,k}$. Notice that 
$t\in ]\lambda_{n_0+1}^{-1}, \lambda_{n_0}^{-1}[$ is equivalent to
\begin{equation*}
\frac{1}{t\lambda_{n_0+1}} <1 <\frac{1}{t\lambda_{n_0}},
\end{equation*}
which implies that the contribution of the eigenfunctions $P_j1, \; j> n_0$ in  the localization of $\ell_{k, t}$ is exponentially small for large $k$ while the contribution
of $P_j1$ for $ 1\leq j\leq  n_0$ can be exponentially large if in addition $P_j1$ is not zero. These observations are confirmed in Section \ref{sec:numerical tests} by several numerical tests. Finally the results of Propositions  \ref{prop:landscape-function} and  \ref{prop: iterated landscape function}  are still valid for  $w$ non-constant and sufficiently smooth ($\|\cdot\|_2$ should be substituted by  $\|\cdot\|_{L^2(]0, L[; wdx)})$).  
\end{remark}


Theorem~\ref{thm:non-asymptotic} is proved in  Section~\ref{sec:proof1} using a Volterra integral equation representation of solutions of~\eqref{eqn:MAIN} given by Fulton~\cite{Fulton-1982}, while  Theorem~\ref{thm:asymptotic} is proved in Section~\ref{sec:asymptotic} via the asymptotic expansions of $\phi_{\lambda}$, as $\lambda\rightarrow\infty$, given in Fulton and Pruess~\cite{Fulton-Pruess-1994}. Finally,  we prove Propositions~\ref{prop:landscape-function} and \ref{prop: iterated landscape function} in respectively Sections~\ref{sec:proof2} and 
\ref{sec:proof2bis}
using the power method.

\section{Proof of Theorem~\ref{thm:non-asymptotic} (non-asymptotic bounds on $\alpha(\phi_{\lambda})$)}\label{sec:proof1}
Using the Liouville transformation
\[
y(x)=\int_0^x\sqrt{w(s)/p(s)}ds\,\,\,\text{for}\,\, x\in[0,L];\quad B=y(L);
\]
\[
f(y)=(w(x(y))p(x(y)))^{1/4},\quad y\in[0,B]; 
\]
\begin{equation}\label{eqn:v}
v_{\lambda}(y)=\phi_{\lambda}(x(y))f(y),\quad y\in[0,B];
\end{equation}
and
\[
\quad Q(y)=f''(y)/f(y) + q(x(y))/w(x(y)),\quad y\in[0,B],
\]
we recast the problem~\eqref{eqn:MAIN} in the Liouville normal form~\cite[pp. 303--304]{Fulton-Pruess-1994}
\begin{equation}\label{eqn:SV}
\left\{\begin{array}{rcl}
-v_{\lambda}''+Q(y)v_{\lambda}&=&\lambda v_{\lambda},\quad y\in(0,B),\\
v_{\lambda}(0)=v_{\lambda}(B)&=&0.
\end{array}\right.
\end{equation}
It follows from our assumptions on $p$, $w$ and $q$ that $Q\in C([0,B])$, hence $Q\in L^t([0,B])$ for $t\in[1,\infty]$.
Now
\[
\int_{x=0}^{L}\phi_{\lambda}(x)^2dx=\int_{y=0}^B\frac{v_{\lambda}(y)^2}{w(x(y))}dy\in\left[\|w\|^{-1}_{\infty},\|w^{-1}\|_{\infty}\right]\times\int_{y=0}^Bv_{\lambda}(y)^2dy
\]
and
\begin{align*}
\int_{x=0}^{L}\phi_{\lambda}(x)^4dx&=\int_{y=0}^B\frac{v_{\lambda}(y)^4}{p(x(y))^{1/2}w(x(y))^{3/2}}dy\\&\in\left[\|p^{1/2}w^{3/2}\|^{-1}_{\infty},\|p^{-1/2}w^{-3/2}\|_{\infty}\right]\times\int_{y=0}^Bv_{\lambda}(y)^4dy,
\end{align*}
so it remains to examine $\|v_{\lambda}\|_2^2$ and $\|v_{\lambda}\|_4^4$. To this end we recall from Fulton and Pruess~\cite[p. 308]{Fulton-Pruess-1994} that a solution of the ODE in~\eqref{eqn:SV}, normalized such that $v_{\lambda}(0)=0$ and $v'_{\lambda}(0)=(w(0)p(0))^{-1/4}\neq0$, satisfies the associated Volterra integral equation
\begin{equation}\label{eqn:Volterra}
\left({\rm Id}-\frac{1}{\sqrt{\lambda}}K_Q\right)v_{\lambda}(y)=\frac{v'_{\lambda}(0)}{\sqrt{\lambda}}\Phi_{\lambda}(y),\quad y\in[0,B],
\end{equation}
where for any $u\in C^2([0,B])$ we have
\[
K_Qu(y)=\int_{z=0}^yQ(z)\sin(\sqrt{\lambda}(y-z))u(z)dz,\quad y\in]0,B[.
\]
Now write $\|K_Q\|_t$ for the operator norm of $K_Q$ as a mapping from $L^t(]0,B[)$ to $L^t(]0,B[)$.
\begin{lemma}\label{lemma:norms}
For every positive $\lambda$ we have
\[
\|K_Q\|^2_2\le\frac{B^2}{4}\|Q\|^2_{\infty}
\]
and
\[
\|K_Q\|^4_4\le\left(\frac{B^3}{12}+\frac{5B}{32\lambda}+\frac{5}{32\lambda^{3/2}}\right)\|Q\|^4_4.
\]
\end{lemma}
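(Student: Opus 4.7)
\emph{Proof plan.} The strategy is to apply a pointwise Hölder-type inequality to the integrand $Q(z)\sin(\sqrt{\lambda}(y-z))u(z)$ defining $K_Q u(y)$ so as to separate the three factors $Q$, $u$, and the trigonometric weight, then integrate in $y$ and reorganize by Fubini. The $\lambda$-dependence collapses cleanly because the only $\lambda$-sensitive quantity that survives is $\int_0^a\sin^2(\sqrt{\lambda}s)\,ds$, which admits a closed form.

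For the $L^2$ estimate, I apply Cauchy--Schwarz to the pair $(Q\sin,\,u)$:
\begin{equation*}
|K_Q u(y)|^2\le\Bigl(\int_0^y Q(z)^2\sin^2(\sqrt{\lambda}(y-z))\,dz\Bigr)\Bigl(\int_0^y u(z)^2\,dz\Bigr).
\end{equation*}
Replacing the second factor by $\|u\|_2^2$, integrating in $y\in(0,B)$, swapping the order of integration via Fubini, substituting $s=y-z$, and pulling out $\|Q\|_\infty^2$ reduces everything to $\int_0^B\int_0^{B-z}\sin^2(\sqrt{\lambda}s)\,ds\,dz$, which evaluates to $B^2/4-\sin^2(\sqrt{\lambda}B)/(4\lambda)\le B^2/4$. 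This gives the first bound.

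For the $L^4$ estimate, I apply Hölder with exponents $(4,4,2)$, whose reciprocals sum to $1$, to the triple $(Q,\,u,\,\sin(\sqrt{\lambda}(y-\cdot)))$, obtaining
\begin{equation*}
|K_Q u(y)|^4\le\Bigl(\int_0^y Q^4\Bigr)\Bigl(\int_0^y u^4\Bigr)\Bigl(\int_0^y\sin^2(\sqrt{\lambda}(y-z))\,dz\Bigr)^{2}.
\end{equation*}
After bounding the first two factors by $\|Q\|_4^4$ and $\|u\|_4^4$ and integrating in $y$, the key quantity is $\int_0^B\bigl(y/2-\sin(2\sqrt{\lambda}y)/(4\sqrt{\lambda})\bigr)^{2}\,dy$. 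The purely polynomial square contributes the leading $B^3/12$; the cross term and the square of the sine term are handled by integration by parts, and the resulting trigonometric factors $\cos(2\sqrt{\lambda}B)$, $\sin(2\sqrt{\lambda}B)$ and $\sin(4\sqrt{\lambda}B)$ are then bounded in absolute value by $1$, producing the subdominant constants of orders $\lambda^{-1}$ and $\lambda^{-3/2}$. The main obstacle is the bookkeeping in this final step: the several oscillating integrals must be estimated simultaneously, and generously enough, to collapse into the single coefficients $5B/32$ and $5/32$ appearing in the claimed bound.
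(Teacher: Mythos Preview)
Your approach is correct and essentially identical to the paper's: the paper also pulls out $\|Q\|_\infty$ and applies Cauchy--Schwarz for the $L^2$ bound, and for the $L^4$ bound it uses precisely your H\"older split with exponents $(4,4,2)$, reducing to the same integral $\int_0^B\bigl(y/2-\sin(2\sqrt{\lambda}y)/(4\sqrt{\lambda})\bigr)^2\,dy$, which it then evaluates exactly and bounds by replacing each oscillatory factor by $1$ to obtain the constants $5B/32$ and $5/32$. The only cosmetic difference is that in the $L^2$ case the paper factors out $\|Q\|_\infty$ before applying Cauchy--Schwarz rather than after Fubini, which makes no material difference.
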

\begin{proof}
The estimates follow readily from applying H\"older's inequality. We have
\begin{align*}
\|K_Qu\|^2_2&\le\|Q\|_{\infty}^2\|u\|_2^2\int_{y=0}^B\|\sin(\sqrt{\lambda}(y-\cdot))\|_{L^2(]0,y[)}^2dy\\&=\|Q\|_{\infty}^2\|u\|_2^2\int_{y=0}^B\left(\frac{y}{2}-\frac{\cos\sqrt{\lambda}y\sin\sqrt{\lambda}y}{2\sqrt{\lambda}}\right)\\&=\frac{\|Q\|_{\infty}^2}{4}\|u\|_2^2\left(B^2-\frac{\sin^2\sqrt{\lambda}B}{\lambda}\right)\\&\le\frac{\|Q\|_{\infty}^2B^2}{4}\|u\|_2^2,\quad u\in L^2(]0,B[),
\end{align*}
as well as
\begin{align*}
\|K_Qu\|^4_4&\le\|Q\|_4^4\|u\|_4^4\int_{y=0}^B\|\sin(\sqrt{\lambda}(y-\cdot))\|_{L^2(]0,y[)}^4dy\\&=\|Q\|_4^4\|u\|_4^4\left(\frac{B^3}{12}+\frac{B\cos(\sqrt{\lambda}B)^2}{4\lambda}-\frac{\sin\sqrt{\lambda}B\cos(\sqrt{\lambda}B)^3}{16\lambda^{3/2}}\right.\\&\left.-\frac{3B}{32\lambda}-\frac{3\cos\sqrt{\lambda}B\sin\sqrt{\lambda}B}{32\lambda^{3/2}}\right)\\&\le\|Q\|_4^4\left(\frac{B^3}{12}+\frac{5B}{32\lambda}+\frac{5}{32\lambda^{3/2}}\right)\|u\|_4^4,\quad u\in L^4([0,B]),
\end{align*}

\end{proof}
We have from~\eqref{eqn:Volterra} and from Lemma~\ref{lemma:norms} that, for all positive $\lambda$,
\begin{align*}
\|v_{\lambda}\|_2&\ge\lambda^{-1/2}|v'_{\lambda}(0)|\|\Phi_{\lambda}\|_2-\lambda^{-1/2}\|K_Q\|_2\|v_{\lambda}\|_2\\&\ge\lambda^{-1/2}|v'_{\lambda}(0)|\|\Phi_{\lambda}\|_2-\lambda^{-1/2}B\|Q\|_{\infty}\|v_{\lambda}\|_2/2,
\end{align*}
\begin{align*}
\|v_{\lambda}\|_2&\le\lambda^{-1/2}|v'_{\lambda}(0)|\|\Phi_{\lambda}\|_2+\lambda^{-1/2}\|K_Q\|_2\|v_{\lambda}\|_2\\&\le\lambda^{-1/2}|v'_{\lambda}(0)|\|\Phi_{\lambda}\|_2+\lambda^{-1/2}B\|Q\|_{\infty}\|v_{\lambda}\|_2/2,
\end{align*}
\begin{align*}
\|v_{\lambda}\|_4&\ge\lambda^{-1/2}|v'_{\lambda}(0)|\|\Phi_{\lambda}\|_4-\lambda^{-1/2}\|K_Q\|_4\|v_{\lambda}\|_4\\&\ge\lambda^{-1/2}|v'_{\lambda}(0)|\|\Phi_{\lambda}\|_4-\lambda^{-1/2}\left(\frac{B^3}{12}+\frac{5B}{32\lambda}+\frac{5}{32\lambda^{3/2}}\right)^{1/4}\|Q\|_4\|v_{\lambda}\|_4,
\end{align*}
and
\begin{align*}
\|v_{\lambda}\|_4&\le\lambda^{-1/2}|v'_{\lambda}(0)|\|\Phi_{\lambda}\|_4+\lambda^{-1/2}\|K_Q\|_4\|v_{\lambda}\|_4\\&\le\lambda^{-1/2}|v'_{\lambda}(0)|\|\Phi_{\lambda}\|_4+\lambda^{-1/2}\left(\frac{B^3}{12}+\frac{5B}{32\lambda}+\frac{5}{32\lambda^{3/2}}\right)^{1/4}\|Q\|_4\|v_{\lambda}\|_4.
\end{align*}
Specifically, for $B\|Q\|_{\infty}/2<\sqrt{\lambda}$ (assumption~\eqref{eqn:assum1}) and
\[
\left(\frac{B^3}{12}+\frac{5B}{32\lambda}+\frac{5}{32\lambda^{3/2}}\right)^{1/4}\|Q\|_4<\sqrt{\lambda}
\]
(assumption~\eqref{eqn:assum2}), we have
\[
\left(\frac{1-b(B,\lambda)}{1+a(B,\lambda)}\right)^4\le\frac{\alpha(v_{\lambda})}{\alpha(\Phi_{\lambda})}\le\left(\frac{1+b(B,\lambda)}{1-a(B,\lambda)}\right)^4.
\]

\section{Proof of Theorem~\ref{thm:asymptotic} (asymptotic bounds on $\alpha(\phi_{\lambda})$)}\label{sec:asymptotic}
The first part of Theorem~\ref{thm:asymptotic} follows from the fact that
\[
\frac{1\mp b(B,\lambda)}{1\pm a(B,\lambda)}=1+O(\lambda^{-1/2}),\quad\lambda\rightarrow\infty,
\]
together with~\eqref{eqn:bvc} and the estimates in Theorem~\ref{thm:non-asymptotic}.

Next, if $Q\in{\rm BV}([0,B])$ then we can use the asymptotic expansion of $v_{\lambda}$ from~\eqref{eqn:SV} given by Eq.~(3.3)$_{2N}$ of Fulton and Pruess~\cite{Fulton-Pruess-1994} with $N=1$, and get
\[
v_{\lambda}(y)/v_{\lambda}'(0)=\lambda^{-1/2}\sin(\lambda^{1/2}y)-\frac{1}{2\lambda}\int_0^yQ(s)ds\cdot\cos(\lambda^{1/2}y)+O(\lambda^{-3/2}),\quad y\in[0,B],
\]
where the remainder $O(\lambda^{-3/2})$ is uniform in $y \in [0, B]$.
This, in turn, implies
\begin{align*}
\frac{\|v_{\lambda}\|_2^4}{v_{\lambda}'(0)^4}&=\lambda^{-2}\frac{B^2}{4}\\&-\lambda^{-5/2}B\left(\frac{1}{4}\sin(2B\lambda^{1/2})+\int_{y=0}^B\int_{s=0}^yQ(s)ds\sin(\lambda^{1/2}y)\cos(\lambda^{1/2}y)dy\right)+O(\lambda^{-3})
\end{align*}
and
\begin{align*}
\frac{\|v_{\lambda}\|_4^4}{v_{\lambda}'(0)^4}&=\lambda^{-2}\frac{3B}{8}\\&+\lambda^{-5/2}\Biggl(\frac{\sin(4B\lambda^{1/2})-8\sin(2B\lambda^{1/2})}{32}\Biggr.\\&\Biggl.\qquad-2\int_{y=0}^B\int_{s=0}^yQ(s)ds\sin^3(\lambda^{1/2}y)\cos(\lambda^{1/2}y)dy\Biggr)+O(\lambda^{-3})
\end{align*}
as $\lambda\rightarrow\infty$. Thus $c_-\le\lambda^2\|v_{\lambda}\|_2^4/v_{\lambda}'(0)^4\le c_+$
with
\[
c_{\pm}=\frac{B^2}{4}\pm\lambda^{-1/2}B\left(\frac{1}{4}+B\|Q\|_1\right)+O(\lambda^{-1}),\quad\lambda\rightarrow\infty,
\]
and $d_-\le\lambda^2\|v_{\lambda}\|_4^4/v_{\lambda}'(0)^4\le d_+$ with
\[
d_{\pm}=\frac{3B}{8}\pm\lambda^{-1/2}\left(\frac{9}{32}+2B\|Q\|_1\right)+O(\lambda^{-1}),\quad\lambda\rightarrow\infty.
\]
Finally, if $Q\in{\rm AC}([0,B])$ and $Q'\in{\rm BV}([0,B])$ then we can use the asymptotic expansion of $v_{\lambda}$ given by~\cite[Eq.~(3.3)$_{2N+1}$]{Fulton-Pruess-1994} with $N=1$, to get
\begin{align}\label{eqn:vvv}
v_{\lambda}(y)/v'_{\lambda}(0)&=\lambda^{-1/2}\sin(\lambda^{1/2}y)-\lambda^{-1}\frac{1}{2}\cos(\lambda^{1/2}y)\int_0^yQ(s)ds\nonumber\\&+\lambda^{-3/2}\frac{1}{4}\sin(\lambda^{1/2}y)\left(\int_0^yQ(s)\int_0^sQ(\tau)d\tau ds+Q(0)+Q(y)\right)+O(\lambda^{-2}),
\end{align}
where the remainder $O(\lambda^{-2})$ is uniform in $y$. This, in turn, implies
\begin{align*}
\frac{\|v_{\lambda}\|_2^4}{v'_{\lambda}(0)^4}&=\lambda^{-2}\frac{B^2}{4}\\&-\lambda^{-5/2}B\left(\frac{\sin(2B\lambda^{1/2})}{4}+\int_0^B\sin(\lambda^{1/2}y)\cos(\lambda^{1/2}y)\int_0^yQ(s)dsdy\right)\\&+\lambda^{-3}B\Biggl(\frac{1}{4}\int_0^B\cos^2(\lambda^{1/2}y)\left(\int_0^yQ(s)ds\right)^2dy\Biggr.\\&\Biggl.+\frac{1}{2}\int_0^B\sin^2(\lambda^{1/2}y)\left(\int_0^yQ(s)\int_0^sQ(\tau)d\tau ds+Q(0)+Q(y)\right)dy\Biggr)+O(\lambda^{-7/2})
\end{align*}
and
\begin{align*}
\frac{\|v_{\lambda}\|_4^4}{v'_{\lambda}(0)^4}&=\lambda^{-2}\frac{3B}{8}\\&+\lambda^{-5/2}\left(\frac{\sin(4B\lambda^{1/2})-8\sin(2B\lambda^{1/2})}{32}-2\int_{y=0}^B\sin^3(\lambda^{1/2}y)\cos(\lambda^{1/2}y)\int_0^yQ(s)dsdy\right)\\&+\lambda^{-3}\Biggl(\frac{3}{2}\int_0^B\sin^2(\lambda^{1/2}y)\cos^2(\lambda^{1/2}y))\left(\int_0^yQ(s)ds\right)^2dy\Biggr.\\&\Biggl.+\int_0^B\sin^4(\lambda^{1/2}y)\left(\int_0^yQ(s)\int_0^sQ(\tau)d\tau ds+Q(0)+Q(y)\right)dy\Biggr)+O(\lambda^{-7/2}).
\end{align*}
Now each eigenvalue $\lambda$ is a zero of $\lambda\mapsto v_{\lambda}(B)$~\cite[p. 319, Case 4]{Fulton-Pruess-1994},
and in light of~\eqref{eqn:vvv} we therefore have
\[
\sin(\lambda^{1/2}B)=\lambda^{-1/2}\frac{1}{2}\int_0^BQ(s)ds\cdot\cos(\lambda^{1/2}B)+O(\lambda^{-3/2}),
\]
\[
\sin^2(\lambda^{1/2}B)=\lambda^{-1}\frac{1}{4}\left(\int_0^BQ(s)ds\right)^2\cos^2(\lambda^{1/2}B)+O(\lambda^{-2}),\quad\cos^2(\lambda^{1/2}B)=1+O(\lambda^{-1}),
\]
\[
\sin(2\lambda^{1/2}B)=\lambda^{-1/2}\int_0^BQ(s)ds\cdot\cos^2(\lambda^{1/2}B)+O(\lambda^{-3/2}),
\]
and
\[
\sin(4\lambda^{1/2}B)=\lambda^{-1/2}2\int_0^BQ(s)ds\cdot\cos(2\lambda^{1/2}B)\cos^2(\lambda^{1/2}B)+O(\lambda^{-3/2}).
\]
Also, using integration by parts, we find for any $\phi,\psi\in C^4([0,B])$ with $\phi(0)=0$ that
\begin{align*}
\int_0^B\sin(\lambda^{1/2}y)\cos(\lambda^{1/2}y)\phi(y)dy&=\lambda^{-1/2}\left(\frac{\sin^2(\lambda^{1/2}B)}{2}-\frac{1}{4}\right)\phi(B)+O(\lambda^{-1})\\&=-\lambda^{-1/2}\frac{1}{4}\phi(B)+O(\lambda^{-1}),
\end{align*}
\begin{align*}
\int_0^B\cos^2(\lambda^{1/2}y)\phi(y)dy&=\frac{1}{2}\int_0^B\phi(y)dy+\lambda^{-1/2}\frac{\sin(2\lambda^{1/2}B)}{4}\phi(B)+O(\lambda^{-1})\\&=\frac{1}{2}\int_0^B\phi(y)dy+O(\lambda^{-1}),
\end{align*}
\begin{align*}
\int_0^B\sin^2(\lambda^{1/2}y)\psi(y)dy&=\frac{1}{2}\int_0^B\psi(y)dy-\lambda^{-1/2}\frac{\sin(2\lambda^{1/2}B)}{4}\psi(B)+O(\lambda^{-1})\\&=\frac{1}{2}\int_0^B\psi(y)dy+O(\lambda^{-1}),
\end{align*}
\begin{align*}
\int_0^B\sin^3(\lambda^{1/2}y)\cos(\lambda^{1/2}y)\phi(y)dy&=\lambda^{-1/2}\left(\frac{\sin^4(\lambda^{1/2}B)}{4}-\frac{3}{32}\right)\phi(B)+O(\lambda^{-1})\\&=-\lambda^{-1/2}\frac{3}{32}\phi(B)+O(\lambda^{-1}),
\end{align*}
\begin{align*}
\int_0^B\sin^2(\lambda^{1/2}y)\cos^2(\lambda^{1/2}y)\phi(y)dy&=\frac{1}{8}\int_0^B\phi(y)dy\\&+\lambda^{-1/2}\Biggl(\frac{\sin(2\lambda^{1/2}B)-4\sin(\lambda^{1/2}B)\cos^3(\lambda^{1/2}B)}{16}\phi(B)\Biggr.\\&\Biggl.-\frac{\cos^4(\lambda^{1/2}B)}{2}\phi'(B)+\frac{3}{16}\phi'(B)+\frac{5}{16}\phi'(0)\Biggr)+O(\lambda^{-1})\\&=\frac{1}{8}\int_0^B\phi(y)dy+\lambda^{-1/2}\frac{5}{16}\left(\phi'(0)-\phi'(B)\right)+O(\lambda^{-1}),
\end{align*}
and
\begin{align*}
\int_0^B\sin^4(\lambda^{1/2}y)\psi(y)dy&=\frac{3}{8}\int_0^B\psi(y)dy\\&-\lambda^{-1/2}\left(\frac{\sin^3(\lambda^{1/2}B)\cos(\lambda^{1/2}B)}{4}+\frac{3\sin(2\lambda^{1/2}B)}{16}\right)\psi(B)+O(\lambda^{-1})\\&=\frac{3}{8}\int_0^B\psi(y)dy+O(\lambda^{-1}).
\end{align*}
Using these expansions, we get
\begin{align*}
\lambda^2\frac{\|v_{\lambda}\|_2^4}{v'_{\lambda}(0)^4}&=\frac{B^2}{4}+\lambda^{-1}\frac{B}{4}\Biggl[\frac{1}{2}\int_0^B\left(\int_0^yQ(s)ds\right)^2dy\Biggr.\\&\Biggl.+\int_0^B\left(\int_0^yQ(s)\int_0^sQ(\tau)d\tau ds+Q(0)+Q(y)\right)dy\Biggr]+O(\lambda^{-3/2})
\end{align*}
and
\begin{align*}
\lambda^2\frac{\|v_{\lambda}\|_4^4}{v'_{\lambda}(0)^4}&=\frac{3B}{8}\\&+\lambda^{-1}\frac{3}{8}\Biggl[\frac{1}{2}\int_0^B\left(\int_0^yQ(s)ds\right)^2dy\Biggr.\\&\Biggl.+\int_0^B\left(\int_0^yQ(s)\int_0^sQ(\tau)d\tau ds+Q(0)+Q(y)\right) dy\Biggr]+O(\lambda^{-3/2}).
\end{align*}
Note that the factors multiplying $\lambda^0$ and $\lambda^{-1}$ in $\|v_{\lambda}\|_2^4$ are proportional to those in $\|v_{\lambda}\|_4^4$, with the proportionality constant $2B/3$. We thus have
\begin{align*}
\alpha(v_{\lambda})=\frac{B^2/4+\lambda^{-1}s+O(\lambda^{-3/2})}{(3/2B)(B^2/4+\lambda^{-1}s+O(\lambda^{-3/2}))}=\frac{2B}{3}+O(\lambda^{-3/2}),
\end{align*}
where
\[
s=\frac{B}{4}\Biggl[\frac{1}{2}\int_0^B\left(\int_0^yQ(s)ds\right)^2dy+\int_0^B\left(\int_0^yQ(s)\int_0^sQ(\tau)d\tau ds+Q(0)+Q(y)\right) dy\Biggr].
\]

\section{Proof of Proposition~\ref{prop:landscape-function} }\label{sec:proof2}
By construction $T$ is self-adjoint and  diagonalizable. Hence it can be written in the following form
\begin{equation} \label{eigenexpansion}
    T = \sum_{j=1}^\infty \lambda_j P_j,
\end{equation}    
where $(\lambda_j)_{j\in \bm{N}_0}$ is the strictly increasing sequence of  eigenvalues of $T$, and  $P_j$ are the orthogonal projections onto the eigenspaces associated to $\lambda_j, \; j\in \bm{N}_0.$\\

Since $\ell_k \in D(T)$, it has the following expansion
\[
\ell_k= \tilde \ell_k/\|\tilde \ell_k\|_{2},\quad \tilde \ell_k= \sum_{j=1}^\infty \lambda_j^{-k} P_j1.
\]
Straightforward computations give
\begin{eqnarray}\label{intermediate estimate1}
\|\lambda_1^k\tilde \ell_k -P_11 \|_2 \leq L^{1/2} \left(\frac{\lambda_1}{\lambda_2}\right)^k, \quad \textrm{and  } \|P_11 \|_2 \leq \|\lambda_1^k\tilde \ell_k\|_2.
\end{eqnarray}
We then deduce
\begin{equation} \label{intermediate estimate2}
\|\lambda_1^k \tilde \ell_k\|_2 - \|P_11\|_2 \leq L^{1/2}\left(\frac{\lambda_1}{\lambda_2}\right)^k.
\end{equation}
Similarly, since $T\tilde \ell_k \in L^2(]0,L[)$, we have 
\begin{eqnarray}\label{intermediate3}
\|\lambda_1^kT^{1/2}\tilde \ell_k - T^{1/2}P_11\|_2 \leq (\lambda_1L)^{1/2} \left(\frac{\lambda_1}{\lambda_2}\right)^{k-1/2}. 
\end{eqnarray}
Recall that $\phi_1>0$, which implies $ \|P_11\|_2>0 $ and $\phi_1= P_11/\|P_11\|_2 $.
Now combining inequalities \eqref{intermediate estimate1},   \eqref{intermediate estimate2}  and  \eqref{intermediate3}, we get  
\begin{eqnarray} 
\|T^{1/2}\ell_k - T^{1/2}\phi_1\|_2 \leq \|\lambda_1^kT^{1/2}\tilde \ell_k - T^{1/2}P_11\|_2  \|P_11\|_2^{-1}+
\lambda_1^{1/2} \left(\|\lambda_1^k \tilde \ell_k\|_2 - \|P_11\|_2 \right)  \|P_11\|_2^{-1}\nonumber\\
\leq 2(\lambda_1L)^{1/2} \left(\frac{\lambda_1}{\lambda_2}\right)^{k-1/2}  \|P_11\|_2^{-1}.\label{intermediate result one}
\end{eqnarray}
On the other hand, we have 
\begin{equation*}
|\varphi(x)| \leq \int_0^x|\varphi^\prime(s)|ds\leq L^{1/2}\|\varphi^\prime\|_2, \quad \forall x\in ]0, L[,
\end{equation*}
for all $\varphi \in C^\infty_0(]0,L[)$.\\

Since  $C^\infty_0(]0,L[)$ is dense in $D(T^{1/2})$, we get
\begin{equation} \label{Sobolev embedding}
\|\varphi\|_\infty \leq L^{1/2} \|T^{1/2}\varphi\|_2, \quad \forall    \varphi \in D(T^{1/2}).
\end{equation}
Combining inequalities \eqref{Sobolev embedding} and \eqref{intermediate result one}, we obtain the desired result. 
\section{Proof of Proposition \ref{prop: iterated landscape function} } \label{sec:proof2bis}
Using the spectral expansion \eqref{eigenexpansion}, we get
\begin{equation*}
\ell_{k,t} = \sum_{j=1}^\infty \frac{1}{(t\lambda_j)^k} P_j1. 
\end{equation*}
Hence 
\begin{equation*}
T^{1/2}\left(  \ell_{k,t}-\sum_{j=1}^{n_0} \frac{1}{(t\lambda_j)^k} P_j1\right) =  \sum_{j=n_0+1}^\infty \frac{1}{(t\lambda_j)^k} T^{1/2}P_j1.
\end{equation*}
Therefore
\begin{equation}
\left\| T^{1/2}\left(  \ell_{k,t}-\sum_{j=1}^{n_0} \frac{1}{(t\lambda_j)^k} P_j1\right) \right\|_2 \leq \frac{L^{1/2}}{t^{1/2}}
\frac{1}{(t\lambda_{n_0+1})^{k-1/2}}.
\end{equation}
Applying again the Sobolev inequality~\eqref{Sobolev embedding}, we recover the final estimate.

\section{The landscape function: numerical tests} \label{sec:numerical tests} 
We start by illustrating the consequences of Proposition~\ref{prop:landscape-function}. For Figure~\ref{fig:Prop1} we use $L=1$ and
\[
p(x)=\tanh(40 x/L - 10)+1.1,\quad q(x)=0,\quad x\in[0,L],
\]
as in~\eqref{eqn:p} in Section~\ref{sec:intro}), while Figure~\ref{fig:setup2} shows the graphs of
\[
p(x)=\tanh(40 x/L - 20)+1.1,\quad q(x)=2+\sin(2\pi x),\quad x\in[0,L],
\]
used, with $L=1$, for the results of Figure~\ref{fig:Prop1--2}. Finally, Figure~\ref{fig:setup3} shows the functions
\[
p(x)=\tanh(40 x/L - 10)+1.1,\quad q(x)=2+\sin(2\pi x),\quad x\in[0,L],
\]
used, with $L=5$, for the results of Figure~\ref{fig:Prop1--3}.
\begin{figure}
    \centering
    \includegraphics[scale=0.25]{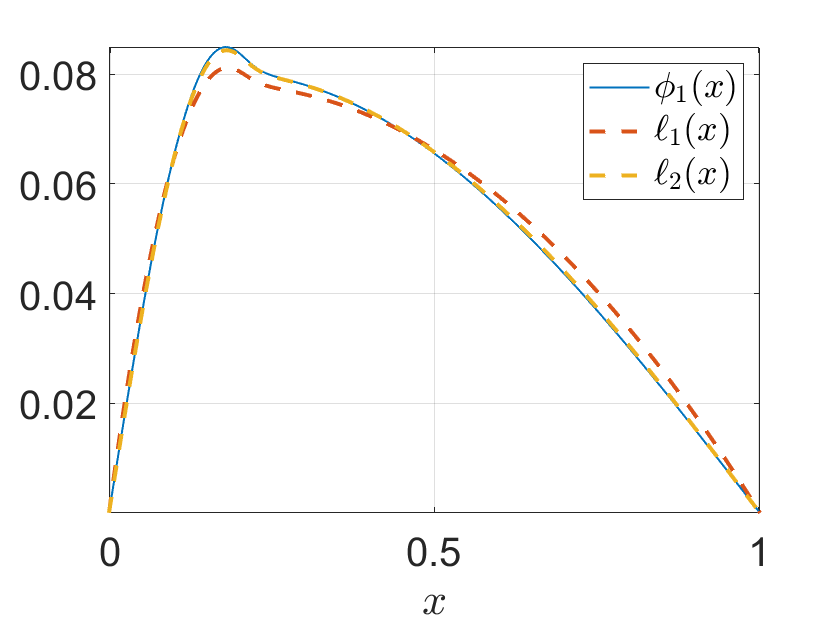}
    \includegraphics[scale=0.25]{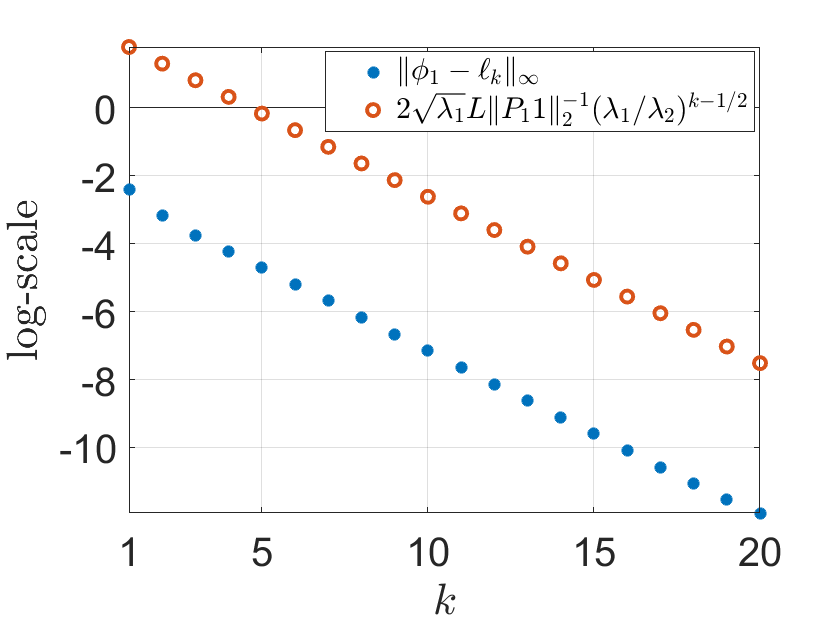}
    \caption{Left: the functions $\ell_k$ approach the first eigenvector $\phi_1$ pointwise as $k$ increases. Right: actual value vs. upper bound on $\|\phi_1-\ell_k\|_{\infty}$, see Proposition~\ref{prop:landscape-function}.}
    \label{fig:Prop1}
\end{figure}
\begin{figure}
    \centering
    \includegraphics[scale=0.3]{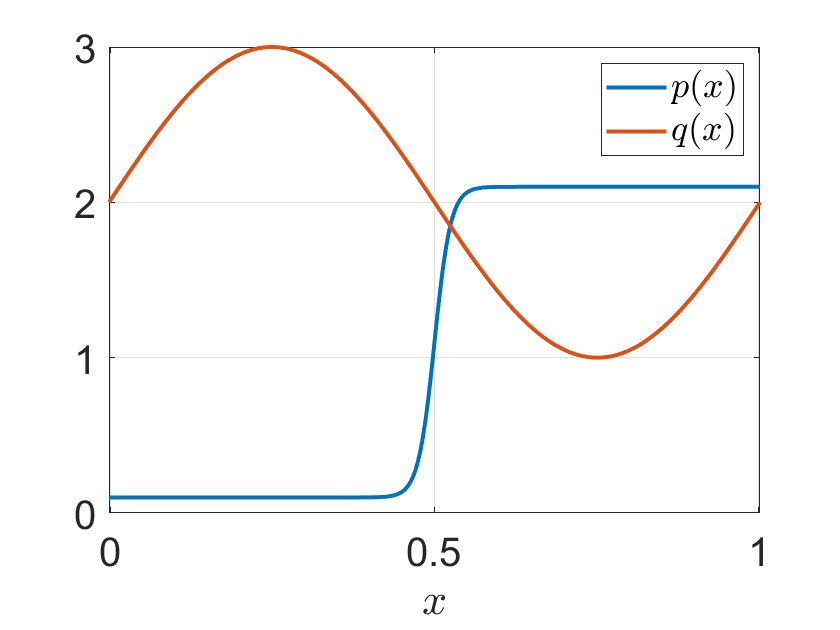}
    \caption{The functions $p(x)$ and $q(x)$ used for the results of Figure~\ref{fig:Prop1--2}.}
    \label{fig:setup2}
\end{figure}
\begin{figure}
    \centering
    \includegraphics[scale=0.25]{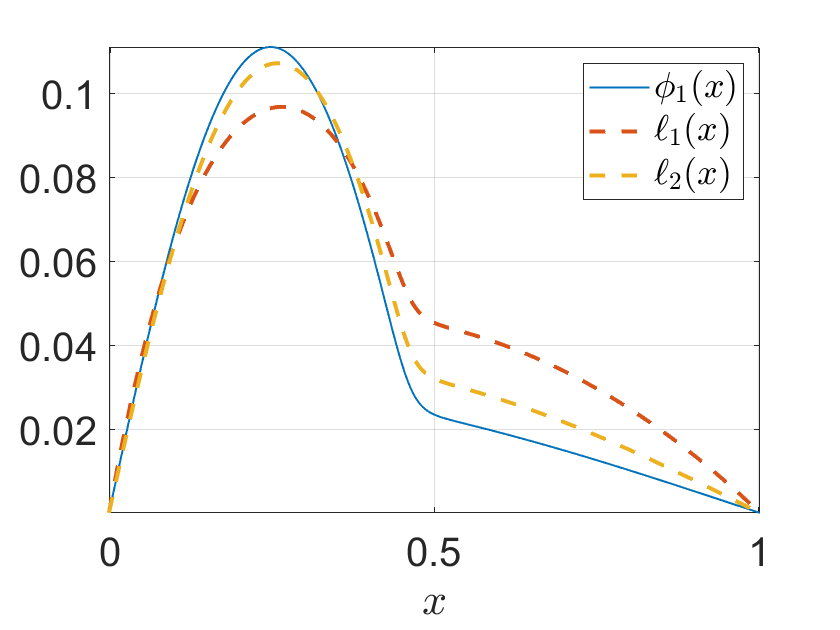}
    \includegraphics[scale=0.25]{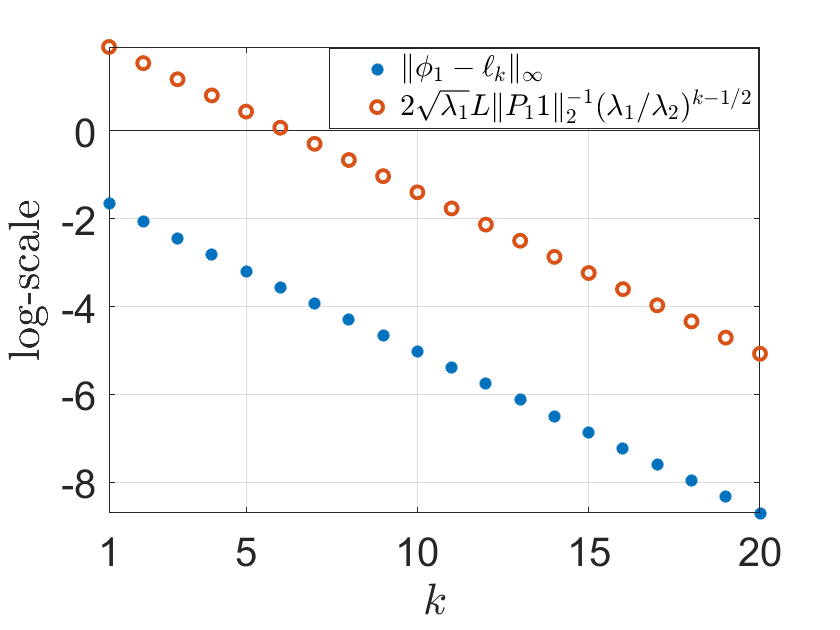}
    \caption{Left: the functions $\ell_k$ approach the first eigenvector $\phi_1$ pointwise as $k$ increases. Right: actual value vs. upper bound on $\|\phi_1-\ell_k\|_{\infty}$, see Proposition~\ref{prop:landscape-function}.}
    \label{fig:Prop1--2}
\end{figure}
\begin{figure}
    \centering
    \includegraphics[scale=0.3]{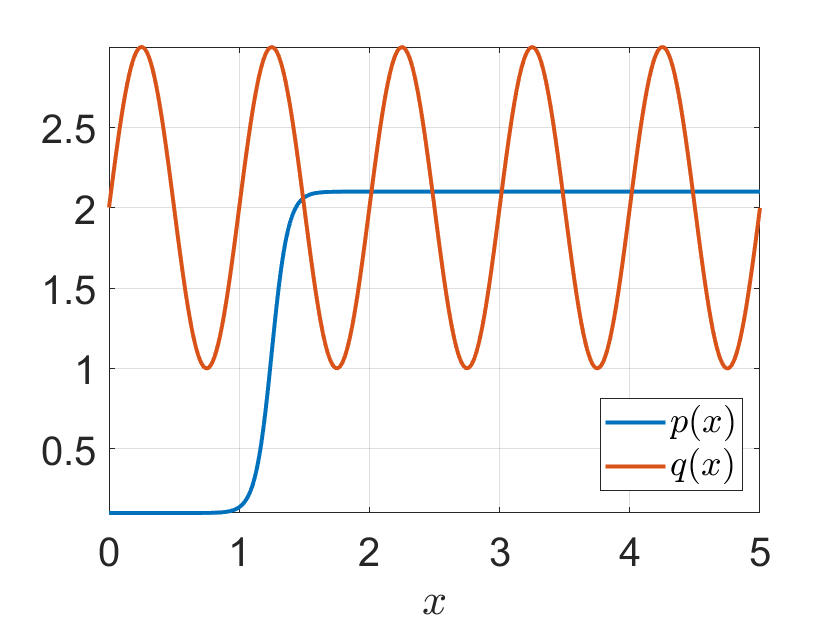}
    \caption{The functions $p(x)$ and $q(x)$ used for the results of Figure~\ref{fig:Prop1--3}.}
    \label{fig:setup3}
\end{figure}
\begin{figure}
    \centering
    \includegraphics[scale=0.25]{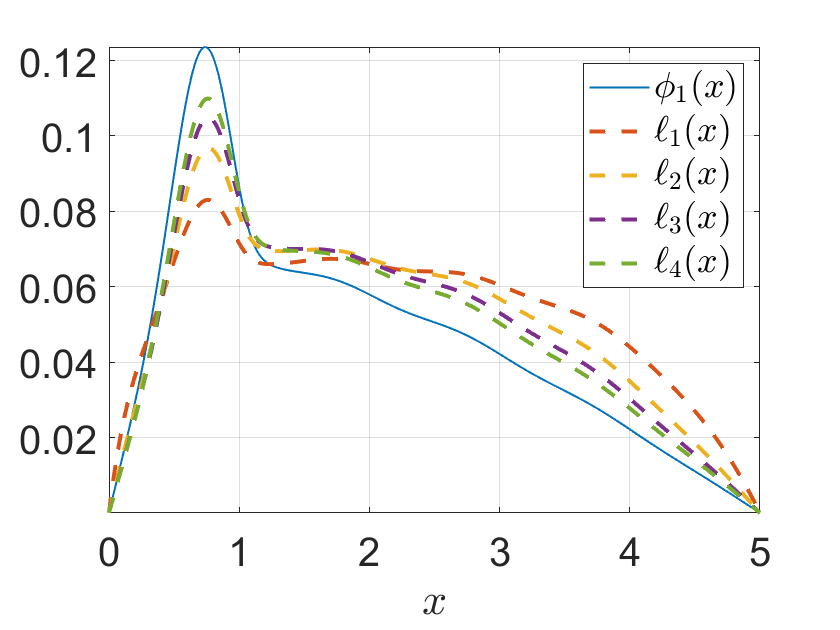}\includegraphics[scale=0.25]{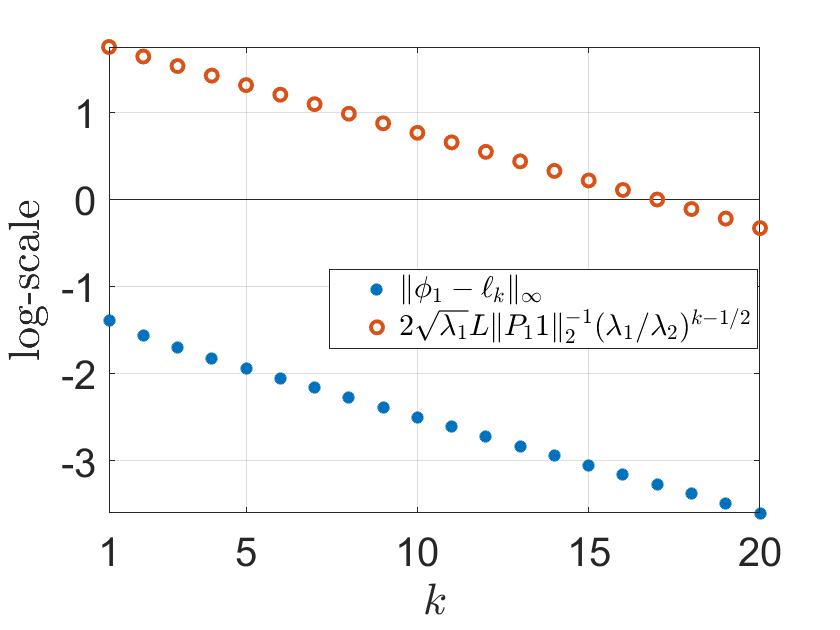}
    \caption{Left: the functions $\ell_k$ approach the first eigenvector $\phi_1$ pointwise as $k$ increases. Right: actual value vs. upper bound on $\|\phi_1-\ell_k\|_{\infty}$, see Proposition~\ref{prop:landscape-function}.}
    \label{fig:Prop1--3}
\end{figure}
Next, in Figure~\ref{fig:Prop2} we illustrate the validity of the upper bound on $\|\ell_{k,t}-\sum_{j=1}^{n_0}(t\lambda_j)^{-k}P_j1\|_{\infty}$, as given in Proposition~\ref{prop: iterated landscape function}. Since the constants $(t\lambda_j)^{-1}$, $j=1,\dots,n_0$, are greater than 1, the numerical error present in the above $L^{\infty}$-norm can grow exponentially with $k$. To avoid this numerical instability, in Figure~\ref{fig:Prop2} we plot the equivalent quantity $\|\sum_{j=n_0+1}^{\infty}(t\lambda_j)^{-k}(t\lambda_j)^{-k}P_j1\|_{\infty}$, with the series truncated at $j=20$.
\begin{figure}
    \includegraphics[scale=0.25]{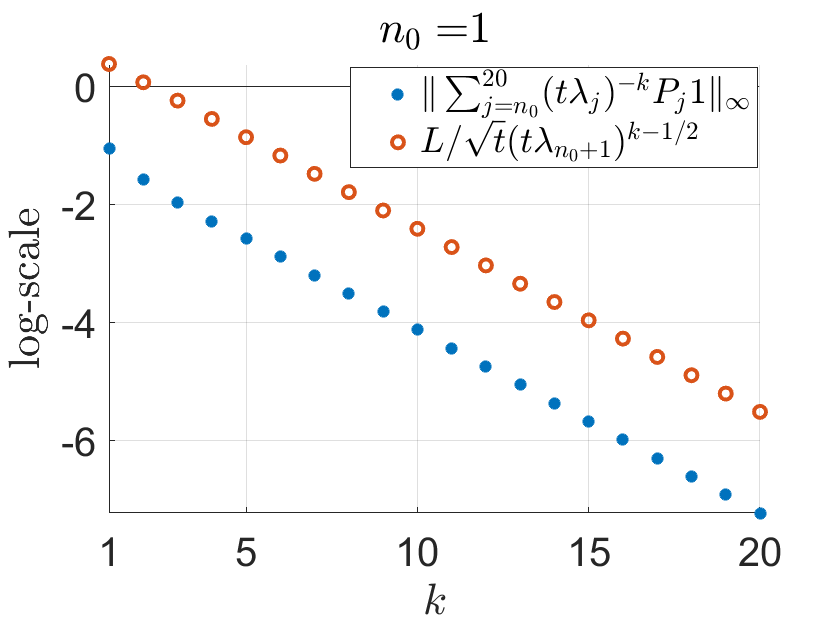}    \includegraphics[scale=0.25]{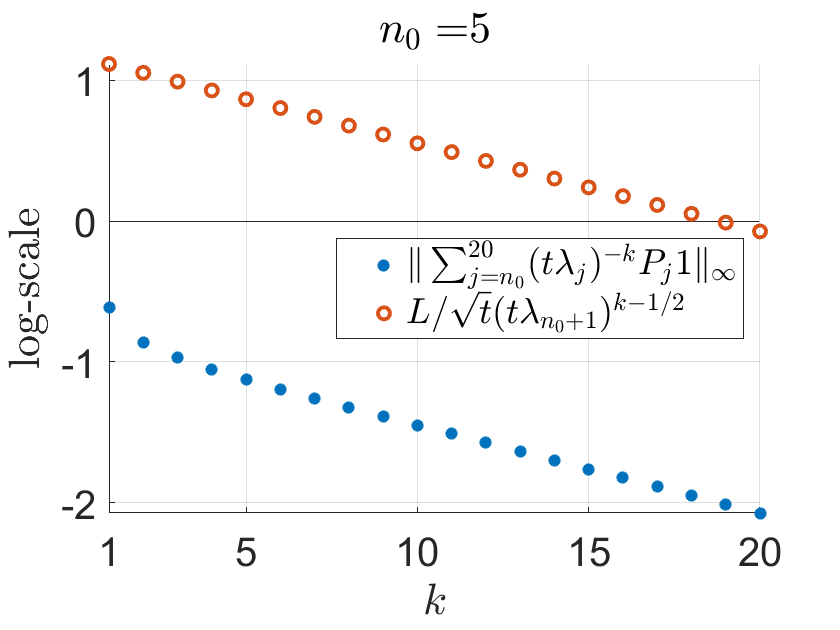}    \\
    \includegraphics[scale=0.25]{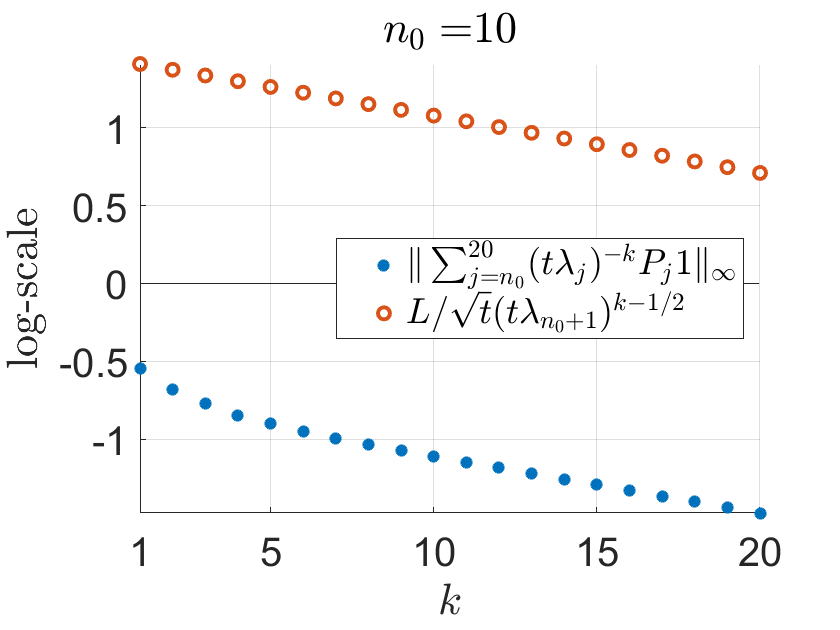}    \includegraphics[scale=0.25]{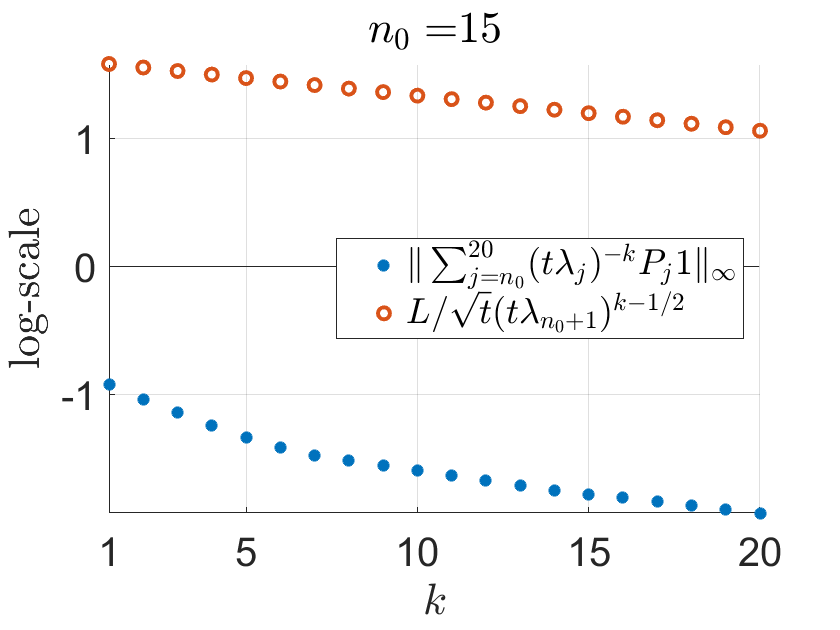}
    \caption{Illustration of the upper bound of Proposition~\ref{prop: iterated landscape function} for different values of $n_0$.}
    \label{fig:Prop2}
\end{figure}

\section*{Acknowledgments}
\thanks{M. K. was supported by The Villum Foundation (grant no. 25893).}

\bibliographystyle{amsplain}
\bibliography{biblio}

\end{document}